\documentclass{article}
\usepackage[T1]{fontenc}
\usepackage[utf8]{inputenc}
\usepackage{geometry}
\usepackage{amsmath,amsfonts,amssymb,amsthm,thmtools}
\usepackage{enumerate}

\usepackage[dvipsnames]{xcolor} 
\usepackage[color=Orange!50!white, textwidth=22mm]{todonotes}
\usepackage[unicode=true,
 bookmarks=false,
 breaklinks=false,pdfborder={0 0 1},colorlinks=false]
 {hyperref}
\usepackage[capitalize]{cleveref}

\usepackage{comment}
\makeatletter
\theoremstyle{plain}
\newtheorem{thm}{\protect\theoremname}
\theoremstyle{plain}
\newtheorem{lemma}[thm]{\protect\lemmaname}

\theoremstyle{remark}

\theoremstyle{plain}

\theoremstyle{plain}

\theoremstyle{plain}

\theoremstyle{definition}
\newtheorem{remark}{Remark}

\makeatother

\providecommand{\claimname}{Claim}
\providecommand{\corollaryname}{Corollary}
\providecommand{\lemmaname}{Lemma}
\providecommand{\theoremname}{Theorem}
\providecommand{\problemname}{Problem}
\providecommand{\propositionname}{Proposition}

\newcommand{\FF}{\mathbb F}
\newcommand{\ZZ}{\mathbb Z}

\usepackage{pifont}

\begin{document}

\title{A new lower bound for two-color van der Waerden numbers}
\author{Marcelo Campos\thanks{IMPA, Estrada Dona Castorina 110, Rio de Janeiro, 22460-320. Email: {\tt marcelo.campos@impa.br}\\ Research supported by Serrapilheira (grant R-2412-51283)}\and Jacob Fox\thanks{Department of Mathematics, Stanford University, Stanford, CA 94305. Email: {\tt jacobfox@stanford.edu}.\\ Research supported by NSF awards DMS-2452737 and DMS-2154129.}\and Carl Schildkraut\thanks{Department of Mathematics, Stanford University, Stanford, CA 94305. Email: {\tt carlsch@stanford.edu} \\Research supported by an NSF Graduate Research Fellowship Program under Grant No. DGE-2146755.}}
\date{}
\maketitle

\begin{abstract}
The \emph{van der Waerden number} $w(k)$ is the smallest positive integer $N$ such that every two-coloring of $\{1,2,\ldots,N\}$ contains a monochromatic $k$-term arithmetic progression. 
We prove that $w(k) \geq (1-o(1))k2^{k-1}$ holds for all positive integers $k$. 
This verifies a conjecture of Erd\H{o}s. 
In 1968, Berlekamp proved the same result when $k-1$ is prime. The coloring for general $k$ can be viewed as a product of Berlekamp's colorings for various primes. It was found by ChatGPT 5.6 Sol Pro.

\end{abstract}

\section{Introduction}

The van der Waerden number $w(k)$ is the smallest positive integer $N$ such that every two-coloring of the first $N$ positive integers contains a monochromatic $k$-term arithmetic progression. Van der Waerden \cite{vanderWaerden} proved that these numbers exist. 

The first exponential lower bound on $w(k)$ was proved by Erd\H{o}s and Rado \cite{ErRa52} in 1952 in an early application of the probabilistic method. Berlekamp \cite{Berlekamp} in 1968 used an algebraic construction to prove that $w(p+1) > p2^p$ for every prime $p$. 

Erd\H{o}s asked in many different papers over the years to improve the bounds on $w(k)$. In particular, Erd\H{o}s conjectured \cite{Erdos1980} that $\lim_{k \to \infty}w(k)/2^k = \infty$. He also offered \$500 to prove or disprove that van der Waerden numbers have superexponential growth, that is, $\lim_{k \to \infty}w(k)^{1/k}=\infty$. Fox and Hunter \cite{FoxHunter} proved that three-color van der Waerden numbers have superexponential growth.

With the development of more sophisticated applications of the probabilistic method, further lower bound improvements on $w(k)$ were successively made  \cite{Schmidt,erdos lovasz,Spencer77,szabo,Kozik,KuSh,KS16}, with the previous best known general lower bound by Kozik and Shabanov \cite{KS16} in 2016 of the form $w(k) \geq c2^k$ for some constant $c>0$ and all $k$.  

We extend Berlekamp's theorem to all positive integers $k$. This verifies Erd\H{o}s' conjecture. 

\begin{thm}\label{main}
    For every positive integer $k$ we have $w(k)\geq (1-o(1))k2^{k-1}$.
\end{thm}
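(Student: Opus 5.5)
The plan is to realize the phrase in the abstract: build the coloring from a product of Berlekamp fields. First, write $k-1=p_1+\dots+p_r$ as a sum of a \emph{bounded} number of \emph{distinct} primes, each of size $\Omega(k)$. By Vinogradov/Helfgott-type results, three or four distinct primes of comparable size suffice for all large $k$; small $k$ are absorbed into the $o(1)$. Let $A=\FF_{2^{p_1}}\times\dots\times\FF_{2^{p_r}}$. This is an $\FF_2$-algebra of dimension $k-1$. Let $g=(\alpha_1,\dots,\alpha_r)$ with each $\alpha_i$ primitive. Since $\gcd(2^a-1,2^b-1)=2^{\gcd(a,b)}-1=1$ for distinct primes $a,b$, the order of $g$ is
\[
M=\prod_i (2^{p_i}-1)=(1-o(1))2^{k-1},
\]
where the $o(1)$ comes from $r$ being bounded and each $p_i\to\infty$. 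Fix an $\FF_2$-hyperplane $H\subset A$, for instance the kernel of a linear functional that is nonzero on every factor. Color $n\in\{0,\dots,N-1\}$ red if $g^n\in H$ and blue otherwise, with $N=(1-o(1))kM$. This is the natural product analogue of Berlekamp's coloring.

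To check that there is no monochromatic $k$-term progression $n,n+d,\dots,n+(k-1)d$, set $\beta=g^d$. The terms are $g^n\beta^j$ for $0\le j\le k-1$. Multiplication by $g^n$ is invertible, so this reduces to the $\FF_2$-span of $1,\beta,\dots,\beta^{k-1}$, i.e.\ to the degree of the minimal polynomial of $\beta$ in $A$.

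\emph{Generic case.} Suppose $d\not\equiv 0 \pmod{2^{p_i}-1}$ for every $i$. Then each $\beta_i=\alpha_i^d\notin\FF_2$. Because $p_i$ is prime, $\beta_i$ generates $\FF_{2^{p_i}}$, so its minimal polynomial has degree $p_i$. These polynomials are distinct for distinct $p_i$, hence pairwise coprime, and by CRT $\FF_2[\beta]=A$. Hence $1,\dots,\beta^{k-2}$ already form a basis of $A$, so the $k$ terms cannot all lie in the hyperplane $g^{-n}H$, which rules out an all-red progression. For an all-blue progression I would follow Berlekamp. The $k$ vectors $\beta^0,\dots,\beta^{k-1}$ satisfy exactly one linear relation, given by the minimal polynomial $m(x)=\prod_i m_i(x)$, with all coefficients of $m$ used. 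If all $k$ values $\ell(g^n\beta^j)$ equal $1$, that relation forces $m(1)=0$ in $\FF_2$. This contradicts $m_i(1)\neq 0$, since $\beta_i\neq 1$.

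\emph{Degenerate case (main obstacle).} Some components satisfy $\beta_i=1$, i.e.\ $(2^{p_i}-1)\mid d$. Then the span collapses to a subalgebra of dimension smaller than $k-1$ plus the diagonal $\FF_2$-line. Since $N\approx kM$ forces $d\le N/(k-1)\approx (1+o(1))M$, the only possible such $d$ are multiples of a proper product $\prod_{i\in S}(2^{p_i}-1)$, or $d=M$ itself. I would handle these as follows. For $d=M$ (and $d$ near multiples of $M$), all terms have the same color, so I cap $N$ so that $(k-1)M\ge N$ fails to contain such progressions, which costs only a $1-o(1)$ factor. For the other degenerate $d$, the components in $S$ are constant along the progression, while the remaining components generate their factors. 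I will choose $H$, via the functional $\ell=\sum_i \ell_i$ with each $\ell_i$ a nonzero trace-type functional, so that the constant part contributes a fixed bit and the varying part is again Berlekamp-nondegenerate of dimension $\sum_{i\notin S}p_i<k-1$, and hence cannot stay constant over $k$ terms. Getting a single choice of $H$ that works simultaneously for all subsets $S$, and making the boundary bookkeeping give $N\ge(1-o(1))k2^{k-1}$, is the step I expect to require the most care.
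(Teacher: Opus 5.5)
Your proposal is correct and is essentially the paper's proof. It uses the same coloring $n\mapsto \ell(g^n)$ with $\ell=\sum_i\ell_i$ and every $\ell_i\neq 0$ (a condition that is actually necessary, not merely one convenient choice), the same coprime-minimal-polynomial/CRT fact applied on the coordinates where $\alpha_i^d\neq 1$, and the same periodic extension to length $(k-1)M$. The only difference is that you handle the blue color via Berlekamp's relation $m(1)\neq 0$ and the degenerate coordinates as a constant bit. The paper instead disposes of both colors and all $d$ at once, by noting that the consecutive differences $g^{n+(j+1)d}-g^{n+jd}$ lie in $H$ in either color case and vanish on the degenerate coordinates.

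The step you flag as delicate is already settled by your own sketch. For any such $\ell$ and every proper subset $S$, the varying part is exactly your generic case on $\prod_{i\notin S}\FF_{2^{p_i}}$ with the nonzero functional $\sum_{i\notin S}\ell_i$ and at least $\sum_{i\notin S}p_i+1$ terms, so one $H$ works for all $S$ simultaneously. Taking $N=(k-1)M$ forces $d<M$, so $S$ is never the full index set.
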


\noindent \textbf{AI Use.} 
The coloring used to prove \cref{main}, as well as a proof of \cref{main}, were generated by a query to ChatGPT 5.6 Sol Pro.
The proof we present, while drawing somewhat on the proof given by ChatGPT, was produced by the authors. Specifically, our proof is a modification of an exposition we wrote of Berlekamp's proof in the $k-1$ prime case. All of the writing is our own.

\noindent \textbf{Acknowledgments.} We would like to thank Zach Hunter for helpful comments. 

\section{The proof}

We first generalize the setting slightly.
In an abelian group, a $k$-term arithmetic progression ($k$-AP) is a sequence $a,a+d,\ldots,a+(k-1)d$ with $d$ nonzero. (Note that we do not require that the elements of a $k$-AP are all distinct.)
Define the \emph{cyclic van der Waerden number} $w_{\mathrm{cyc}}(k)$ as the minimum $n$ such that, for every $N \geq n$, every coloring $c\colon\ZZ/N\ZZ \rightarrow \{0,1\}$ contains a monochromatic $k$-AP. 

The following simple lemma bounds van der Waerden numbers in terms of their cyclic variants.

\begin{lemma}\label{fromcyclic}
We have $w(k) \geq (k-1)(w_{\mathrm{cyc}}(k)-1)+1$ for every $k \geq 2$. 
\end{lemma}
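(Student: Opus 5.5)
Set $m = w_{\mathrm{cyc}}(k)-1$. By the definition of $w_{\mathrm{cyc}}(k)$ as a minimum, the value $m$ does not have the property that every $N \ge m$ forces a monochromatic $k$-AP. So there is some $N \geq m$ and a coloring $c\colon \ZZ/N\ZZ\to\{0,1\}$ with no monochromatic $k$-AP, where $k$-APs are taken in the cyclic sense with nonzero difference and repeated elements allowed. It suffices to two-color $\{1,\dots,(k-1)N\}$ with no monochromatic $k$-AP. Since $N\ge m$, this gives $w(k) > (k-1)N \ge (k-1)m$, which is $w(k)\ge (k-1)m+1$, the claimed bound.

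Color $x\in\{1,\dots,(k-1)N\}$ by $c(x \bmod N)$; that is, lay the cyclic coloring down periodically $k-1$ times. Take an integer $k$-AP $a, a+d,\dots,a+(k-1)d$ in this interval with $d\ge 1$. Its reduction mod $N$ is the sequence $a+jd \bmod N$. If $d\not\equiv 0 \pmod N$, this is a $k$-AP in $\ZZ/N\ZZ$ with nonzero difference, so by the choice of $c$ it is not monochromatic. The colors of the integer progression are exactly the colors of the reduced sequence, so the integer progression is not monochromatic either.

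The remaining case, and the only real point, is $N\mid d$. Then $d\ge N$, and the progression spans $(k-1)d \ge (k-1)N$. But two elements of $\{1,\dots,(k-1)N\}$ differ by at most $(k-1)N-1$, so this progression cannot fit in the interval. Hence this case does not occur, and the coloring has no monochromatic $k$-AP. This is exactly why the interval has length $(k-1)N$ and not longer.

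I expect no serious obstacle. The two places needing care are the definition of $w_{\mathrm{cyc}}$ and the allowance of repeated terms. Because $w_{\mathrm{cyc}}$ is defined via "for every $N\ge n$", the failure at $m$ gives a good coloring of $\ZZ/N\ZZ$ for some $N \ge m$, not necessarily for $N=m$ itself; that is enough, since $(k-1)N\ge(k-1)m$ (alternatively, restricting to $\{1,\dots,(k-1)m\}$ also works). Because cyclic APs may repeat elements, the reduction mod $N$ is always a valid cyclic $k$-AP whenever $d\not\equiv0$, even when $d$ shares factors with $N$.
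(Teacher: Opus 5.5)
Your proof is correct and is essentially the paper's argument: you lay the good cyclic coloring down periodically on $\{1,\dots,(k-1)N\}$ and observe that a monochromatic $k$-AP would need a common difference divisible by $N$, which is too large to fit in the interval. Your caution that the good modulus might satisfy $N>m$ is unnecessary. Every $N\ge m+1=w_{\mathrm{cyc}}(k)$ is ruled out by the definition, so in fact $N=m$ exactly, which is what the paper uses directly.
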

\begin{proof}
    Let $n=w_{\mathrm{cyc}}(k)-1$. 
    By the definition of $w_{\mathrm{cyc}}$, there exists a coloring $c\colon\ZZ/n\ZZ \rightarrow \{0,1\}$ with no monochromatic $k$-AP.
    Let $N=(k-1)n$ and consider the periodic coloring $c'\colon\{1,2,\ldots,N\} \rightarrow \{0,1\}$ with period $n$ given by $c'(x)=c(x\bmod n)$.
    By our choice of $c$, any monochromatic $k$-AP in coloring $c'$ must have common difference a multiple of $n$.
    However, $\{1,2,\ldots,N\}$ contains no $k$-AP with common difference a multiple of $n$ as any $k$-AP with terms in $\{1,\ldots,(k-1)n\}$ has common difference at most $n-1$. 
    We conclude that $c'$ admits no monochromatic $k$-AP, and hence $w(k)\geq N+1$.
\end{proof}

Berlekamp's lower bound on $w(p+1)$ nearly\footnote{See the discussion in Remark 2.} follows from \cref{fromcyclic} and the following result.

\begin{thm}[{Berlekamp \cite{Berlekamp}}]\label{berlekamp-cyc}
    For prime $p$, we have $w_{\mathrm{cyc}}(p+1)>2^p-1$.
\end{thm}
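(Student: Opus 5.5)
We exhibit the coloring directly for $N=2^p-1$; by the definition of $w_{\mathrm{cyc}}$, a single $N\geq 2^p-1$ with a good coloring of $\ZZ/N\ZZ$ suffices. The idea is to identify $\ZZ/N\ZZ$ with the multiplicative group $\FF_{2^p}^\times$ via $x\mapsto \alpha^x$, where $\alpha$ is a primitive element; this is well defined since $\alpha$ has order exactly $N$. Fix any nonzero $\FF_2$-linear functional $L\colon \FF_{2^p}\to\FF_2$ and color $c(x)=L(\alpha^x)$.

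\textbf{Step 1: reduce to a geometric progression.} A $(p+1)$-AP $a,a+d,\ldots,a+pd$ in $\ZZ/N\ZZ$ with $d\neq 0$ corresponds to the elements $y,yz,\ldots,yz^p$. Here $y=\alpha^a\neq 0$ and $z=\alpha^d$, and $z\neq 1$ because $d\not\equiv 0 \pmod N$.

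\textbf{Step 2: the first $p$ terms form a basis.} Since $p$ is prime, the only subfields of $\FF_{2^p}$ are $\FF_2$ and $\FF_{2^p}$ itself. Since $z\notin\{0,1\}=\FF_2$, $z$ generates $\FF_{2^p}$, so its minimal polynomial $f(t)=t^p+\sum_{i<p}m_it^i$ over $\FF_2$ has degree $p$. Hence $1,z,\ldots,z^{p-1}$ is an $\FF_2$-basis, and multiplying by $y\neq 0$ shows $y,yz,\ldots,yz^{p-1}$ is a basis too.

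\textbf{Step 3: rule out color $0$.} If $L$ vanished on all of $y,yz,\ldots,yz^{p-1}$, it would vanish on a basis and hence be identically zero, a contradiction. So the progression is not monochromatic in color $0$.

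\textbf{Step 4: rule out color $1$.} This is the key step, where the extra $(p+1)$st term is used. Suppose $L(yz^i)=1$ for all $0\le i\le p-1$. From $f(z)=0$ we get $yz^p=\sum_{i<p}m_i\,yz^i$, so by linearity
\[
L(yz^p)=\sum_{i<p} m_i .
\]
Now $f$ is irreducible of degree $p\ge 2$, so $f$ has no root in $\FF_2$. In particular $f(1)=1+\sum_{i<p}m_i$ equals $1$, which forces $\sum_i m_i=0$. Thus $L(yz^p)=0$, and the progression is not monochromatic in color $1$ either.

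\textbf{Conclusion.} The coloring $c$ of $\ZZ/(2^p-1)\ZZ$ has no monochromatic $(p+1)$-AP, so $w_{\mathrm{cyc}}(p+1)>2^p-1$. The only real subtlety is Step 4: recognizing that irreducibility gives $f(1)=1$, which makes the last term's color forced to be opposite when the first $p$ terms all have color $1$. Everything else is routine linear algebra over $\FF_2$.
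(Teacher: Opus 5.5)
Your proof is correct. It uses the same coloring as the paper (your $\ker L$ is the paper's $U_p$) and the same argument for color $0$: $y,yz,\ldots,yz^{p-1}$ span $\FF_{2^p}$ by fact~\eqref{fact:degp}, so they cannot lie in a hyperplane.

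The routes differ for color $1$. The paper subtracts consecutive terms. If $yz^j\notin\ker L$ for $0\le j\le p$, then the differences $yz^j(z-1)$, $0\le j<p$, lie in $\ker L$. They form another length-$p$ geometric progression with nonzero first term $y(z-1)$, so they span $\FF_{2^p}$, and the color-$0$ argument applies again. You instead use the minimal polynomial to get the linear recurrence $L(yz^p)=\sum_{i<p}m_iL(yz^i)$, and then use $f(1)=1$ to force the last color.

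The two arguments are closely linked. First, $f(1)\neq 0$ is equivalent to $z\neq 1$, which is the input the paper needs for $y(z-1)\neq 0$. Second, writing $f(t)=(t-1)h(t)+f(1)$ gives $y=\sum_{j<p}h_j\,yz^j(z-1)$, i.e.\ $y$ lies in the span of the differences.

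Your version is a direct computation. It shows that the colors along any progression form a linear recurring sequence with characteristic polynomial $f$. The paper's differencing buys uniformity: in the proof of Lemma~\ref{gencyclic} it handles both colors at once and extends to the product colorings $c_{\mathcal P}$, where some $\gamma_p$ may equal $1$. There the minimal polynomial of $(\gamma_p)_p$ vanishes at $1$. Your ``$f(1)=1$'' step would then require first discarding those coordinates and separately showing the leftover constant is zero. Differencing kills those coordinates automatically and reduces everything to the spanning statement of Lemma~\ref{lem:degp}.
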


We will prove the following generalization of \cref{berlekamp-cyc}.

\begin{lemma}\label{gencyclic}
    Let $p_1,\ldots,p_t$ be distinct primes. Then
    \[w_{\mathrm{cyc}}(p_1+\cdots+p_t+1)>(2^{p_1}-1)\cdots(2^{p_t}-1).\]
\end{lemma}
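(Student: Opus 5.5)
The plan is to generalize Berlekamp's field construction. We combine the fields $\FF_{2^{p_i}}$ by the Chinese remainder theorem and take a single $\FF_2$-linear functional on their product. Excluding monochromatic APs then becomes a statement about zeros of linear recurrent sequences over $\FF_2$.

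\textbf{Setup.} Let $N=\prod_i(2^{p_i}-1)$ and $k=p_1+\cdots+p_t+1$. Since $\gcd(2^a-1,2^b-1)=2^{\gcd(a,b)}-1$, the factors are pairwise coprime. So by the Chinese remainder theorem $\ZZ/N\ZZ\cong\prod_i \ZZ/(2^{p_i}-1)\ZZ$. Fix a generator $g_i$ of the cyclic group $\FF_{2^{p_i}}^*$. This gives a group isomorphism
\[\ZZ/N\ZZ\to \textstyle\prod_i \FF_{2^{p_i}}^*,\qquad x\mapsto (g_1^{x},\ldots,g_t^{x}),\]
where the exponent $x$ is read modulo $2^{p_i}-1$ in the $i$-th coordinate. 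For each $i$ fix a nonzero $\FF_2$-linear functional $L_i\colon \FF_{2^{p_i}}\to\FF_2$. Define the coloring
\[c(x)=\sum_i L_i(g_i^{x}) \in \FF_2 .\]
It suffices to show that $c$ has no monochromatic $k$-AP in $\ZZ/N\ZZ$.

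\textbf{Translating an AP into a sequence.} A $k$-AP $a,a+d,\ldots$ with $d\neq 0$ in $\ZZ/N\ZZ$ becomes $(\alpha_i\beta_i^{j})_i$ for $j=0,\ldots,k-1$, where $\alpha_i=g_i^a$ and $\beta_i=g_i^d$. Because $d\ne 0$, not every $\beta_i$ equals $1$. Set $s_j=\sum_i L_i(\alpha_i\beta_i^j)$. Let $I=\{i:\beta_i\neq 1\}$, which is nonempty.

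For $i\in I$, $\beta_i$ lies in no proper subfield, because $p_i$ is prime. Hence its minimal polynomial $m_i$ over $\FF_2$ is irreducible of degree $p_i$. The polynomials $m_i$ for $i\in I$ are pairwise distinct because their degrees differ, and they are distinct from $x+1$. Each component sequence $j\mapsto L_i(\alpha_i\beta_i^j)$ satisfies the recurrence given by $m_i$ when $i\in I$, and is constant when $i\notin I$. Therefore $(s_j)$ satisfies the recurrence with characteristic polynomial
\[f=(x+1)\prod_{i\in I}m_i,\qquad \deg f\le k.\]

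\textbf{Key nonvanishing step.} The sequence of the $i$-th component, for $i\in I$, is not identically zero. Otherwise $L_i$ would vanish on $\alpha_i\FF_2[\beta_i]=\FF_{2^{p_i}}$, which is impossible since $L_i\neq 0$.

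By coprimality of the factors of $f$, the space of sequences annihilated by $f$ decomposes as a direct sum of the spaces annihilated by the individual factors. So $(s_j)$ is not identically zero.

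A nonzero sequence satisfying a linear recurrence of order $D$ cannot have $D$ consecutive zeros, because $D$ consecutive zeros force the whole sequence to vanish. Hence $s_0=\cdots=s_{k-1}=0$ is impossible.

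\textbf{The other color.} Suppose instead $s_0=\cdots=s_{k-1}=1$. Consider the difference sequence
\[t_j=s_j+s_{j+1}=\sum_i L_i\bigl(\alpha_i(\beta_i+1)\beta_i^j\bigr).\]
Here the components with $\beta_i=1$ vanish. So $(t_j)$ satisfies the recurrence with characteristic polynomial $\prod_{i\in I}m_i$, of degree at most $k-1$. It is nonzero by the same argument, using $\alpha_i(\beta_i+1)\neq 0$. Yet it has the $k-1$ consecutive zeros $t_0,\ldots,t_{k-2}$, a contradiction.

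\textbf{Main obstacle.} The delicate point is the decomposition step: justifying that $(s_j)$ is nonzero although some components may have $\beta_i=1$. I plan to handle it through the coprime factorization of $f$. The lost degrees of freedom from components with $\beta_i=1$ are absorbed into the single factor $x+1$, and this is exactly where the extra $+1$ in the length $k$ comes from.
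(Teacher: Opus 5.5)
Your proof is correct and is essentially the paper's argument. The coloring is identical (take $U_p=\ker L_p$), and your claim that the difference sequence $(t_j)$ is a nonzero solution of the recurrence with characteristic polynomial $\prod_{i\in I}m_i$ is the dual form of the paper's lemma that the differenced geometric-progression vectors $v_0,\ldots,v_{s-1}$ are linearly independent; both rest on the coprimality of the degree-$p_i$ minimal polynomials. The only real difference is that the paper treats both colors at once by differencing: if $s_0=\cdots=s_{k-1}$ all take the same value, whichever it is, then $t_0=\cdots=t_{k-2}=0$. So your separate color-$0$ argument via the extra factor $x+1$ is valid but not needed.
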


Let $p$ be any prime number, and consider a finite field $\FF_{2^p}$ of order $2^p$. We record three facts about $\FF_{2^p}$ and set some notation:
\begin{enumerate}[(a)]
    \item The multiplicative group of units of $\FF_{2^p}$ is a cyclic group of order $2^p-1$. 
    Let $\alpha_p$ be an arbitrary generator of this cyclic group.

    \item The field $\FF_{2^p}$ forms a vector space over $\FF_2$ of dimension $p$. Let $U_p$ be an arbitrary codimension-$1$ subspace of this vector space.
    
    \item \label{fact:degp} Since $p$ is prime, the prime field $\FF_2$ is the only proper subfield of $\FF_{2^p}$. In particular, every element of $\FF_{2^p}\setminus\FF_2$ generates $\FF_{2^p}$ as a field extension of $\FF_2$, and so no such element satisfies a nonzero polynomial equation over $\mathbb{F}_2$ of degree strictly less than $p$.
\end{enumerate}
The construction witnessing \cref{berlekamp-cyc} is the coloring $c_p\colon \ZZ/(2^p-1)\ZZ\to\ZZ/2\ZZ$ defined by\footnote{In a certain sense, the coloring of $\ZZ/(2^p-1)\ZZ$ obtained in this way does not depend on the choice of $\alpha_p$ and the subspace $U_p$. All such colorings can be transformed into each other by pre-composition by an invertible linear map $x\mapsto ax+b$ on the ring $\ZZ/(2^p-1)\ZZ$. (Changing $U_p$ corresponds to the translation maps $x\mapsto x+b$, and changing $\alpha_p$ corresponds to multiplication.)}
\[c_p(n)=\begin{cases}0&\text{if }\alpha_p^n\in U_p\\1&\text{otherwise.}\end{cases}\]
To motivate our approach, we outline a proof of \cref{berlekamp-cyc}. 
Any monochromatic (non-trivial) arithmetic progression of color $0$ will correspond to a (non-trivial) geometric progression in $\FF_{2^p}$ which is contained in the codimension 1 subspace $U_p$. For a monochromatic arithmetic progression of color $1$, we subtract consecutive terms of the corresponding geometric progression to obtain a new geometric progression which is also contained in the subspace $U_p$. 
We can then use fact~\eqref{fact:degp} to say that any geometric progression with length at least $p$ must span $\FF_{2^p}$, contradicting  its containment in the proper subspace $U_p$. 

The construction we use to prove \cref{gencyclic} comes from combining $c_p$ for various $p$.
Let $\mathcal P=\{p_1,\ldots,p_t\}$ be a set of distinct primes, and set
\[s=\sum_{p\in\mathcal P}p;\qquad Q=\prod_{p\in\mathcal P}(2^p-1).\]
Consider the coloring $c_{\mathcal P}\colon\ZZ/Q\ZZ\to\ZZ/2\ZZ$ defined by
\[c_{\mathcal P}(n)=\sum_{p\in\mathcal P}c_p(n\bmod 2^p-1).\]
In the proof of Lemma~\ref{gencyclic}, we need the following generalization of fact~\eqref{fact:degp}.
\begin{lemma}\label{lem:degp}
    Let $I$ be any finite set of primes, and take $\gamma_p\in\FF_{2^p}\setminus\FF_2$ arbitrary for each $p\in I$.
    Let $v_j:=\bigoplus_{p\in I}\gamma_p^j(\gamma_p-1)$.
    If $s=\sum_{p\in I}p$, then the elements $v_0,\ldots,v_{s-1}$ span $\bigoplus_{p\in I}\FF_{2^p}$.
\end{lemma}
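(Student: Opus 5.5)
Our plan is to view $V=\bigoplus_{p\in I}\FF_{2^p}$ as a module over the polynomial ring $\FF_2[x]$, with $x$ acting by multiplication by $\gamma_p$ on the $p$-th coordinate. Put $w=\bigoplus_{p\in I}(\gamma_p-1)$, so that $v_j=x^j\cdot w$. The span of $v_0,\ldots,v_{s-1}$ is then $\{f\cdot w:\deg f<s\}$, and we must show this set is all of $V$. Since $\dim_{\FF_2}V=s$ and there are exactly $s$ vectors, spanning is equivalent to linear independence. So it suffices to show that no nonzero polynomial $f\in\FF_2[x]$ of degree less than $s$ satisfies $f\cdot w=0$.

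Let $m_p$ be the minimal polynomial of $\gamma_p$ over $\FF_2$. By fact~\eqref{fact:degp}, $\gamma_p\notin\FF_2$ generates $\FF_{2^p}$, so $m_p$ is irreducible of degree exactly $p$. Since $\gamma_p\neq 1$, the factor $\gamma_p-1$ is a nonzero element of the field $\FF_{2^p}$. Hence $f\cdot w=0$ means $f(\gamma_p)(\gamma_p-1)=0$ for every $p\in I$, which is equivalent to $f(\gamma_p)=0$, i.e.\ $m_p\mid f$ for every $p\in I$.

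The key step is that the $m_p$ are pairwise distinct, hence pairwise coprime irreducibles. This holds because they have distinct degrees $p$, the primes in $I$ being distinct. Therefore $\prod_{p\in I}m_p$ divides $f$. That product has degree $\sum_{p\in I}p=s$, so any nonzero such $f$ has degree at least $s$. This gives the required independence, and hence $v_0,\ldots,v_{s-1}$ span $V$.

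I expect no real obstacle. The only point needing care is the reduction from ``$f\cdot w=0$'' to ``$m_p\mid f$ for all $p$'', which uses that each coordinate lives in a field and that $\gamma_p-1\ne0$. Equivalently, one can phrase the whole argument via the Chinese remainder theorem: $V\cong\FF_2[x]/(\prod_p m_p)$ is a cyclic module generated by $w$, since each coordinate of $w$ is a unit.
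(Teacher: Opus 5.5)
Your proof is correct and follows essentially the same route as the paper. You reduce spanning to linear independence by the dimension count, turn a dependency into a nonzero polynomial $f$ of degree less than $s$ with $f(\gamma_p)=0$ for all $p\in I$ (using $\gamma_p-1\neq 0$), and then use that the minimal polynomials are pairwise coprime irreducibles of distinct degrees $p$ to force $\deg f\geq s$; your closing module/CRT remark is just a repackaging of this same argument.
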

\begin{proof}
    Since $\dim\bigoplus_{p\in I}\FF_{2^p}=s$, it suffices to prove that $v_0,\ldots,v_{s-1}$ are linearly independent.
    If they were linearly dependent, then there would exist some nontrivial polynomial $f$ over $\FF_2$ of degree at most $s-1$ for which
    \[\bigoplus_{p\in I}f(\gamma_p)(\gamma_p-1)=0.\]
    Since $\gamma_p\not\in\FF_2$, this implies that $f(\gamma_p)=0$ for each $p$. So, for each $p\in I$, such an $f$ must be a multiple of the minimal polynomial of $\gamma_p$. By fact~\eqref{fact:degp}, the minimal polynomial of $\gamma_p$ has degree $p$. These minimal polynomials, which are each irreducible, are thus all relatively prime. Hence, $f$ is a multiple of the product of these minimal polynomials,  and so $\deg f\geq\sum_{p\in I}p=s$, a contradiction.
\end{proof}

\begin{proof}[Proof of \cref{gencyclic}]
    We shall show that $c_{\mathcal P}\colon \ZZ/Q\ZZ\to\ZZ/2\ZZ$ avoids monochromatic~$(s+1)$-APs. 
    Suppose to the contrary that there is such a monochromatic progression $a,a+d,\ldots,a+sd$ with $d\in\ZZ/Q\ZZ\setminus\{0\}$. 
    Write $\beta_p=\alpha_p^a$ and $\gamma_p=\alpha_p^d$.
    
    Let $I=\{p\in\mathcal P:\gamma_p\neq 1\}$.
    We first see that $I$ is nonempty. Indeed, if $\alpha_p^d=\gamma_p=1$ for all $p\in\mathcal P$, then $2^p-1\mid d$ for every $p\in\mathcal P$. Since $\mathcal P$ consists of distinct primes, the values $\{2^p-1:p\in\mathcal P\}$ are relatively prime. 
    So, this would imply $Q\mid d$, which contradicts the assumption that $d\in\ZZ/Q\ZZ\setminus\{0\}$.

    Consider the $\FF_2$-vector space $V:=\bigoplus_{p\in\mathcal P}\FF_{2^p}$ along with the component projections $\pi_p\colon V\to\FF_{2^p}$. Define
    \[U:=\{v\in V:\pi_p(v)\not\in \beta_p^{-1}U_p\text{ for an even number of }p\in\mathcal P\}.\]
    The set $U$ is a subspace of $V$ of codimension $1$.
    Moreover, the element
    \[u_j:=\bigoplus_{p\in\mathcal P}\gamma_p^j\]
    satisfies
    \[c_{\mathcal P}(a+jd)=\sum_{p\in\mathcal P}c_p(a+jd)=\sum_{p\in\mathcal P}\begin{cases}0&\text{if }\beta_p\gamma_p^j\in U_p\\1&\text{otherwise}\end{cases}=\sum_{p\in\mathcal P}\begin{cases}0&\text{if }\gamma_p^j\in \beta_p^{-1}U_p\\1&\text{otherwise}\end{cases}=\begin{cases}0&\text{if }u_j\in U\\1&\text{otherwise.}\end{cases}\]
    In other words, the assumption that $a,a+d,\ldots,a+sd$ is monochromatic is equivalent to the statement that the elements $u_0,\ldots,u_s\in V$ either all lie in $U$ or all lie outside of $U$. 
    In either case, we have
    \begin{equation}\label{eq:vj}
    v_j:=\bigoplus_{p\in\mathcal P}\gamma_p^j(\gamma_p-1)=u_{j+1}-u_j\in U
    \end{equation}
    for each $0\leq j<s$.
    
    Let $W=\{v \in V:\pi_p(v)=0~\textrm{for all}~p \in \mathcal{P} \setminus I\}$, so $v_j\in W$ for each $j$. 
    The space $W$ contains the space $0\oplus\cdots\oplus 0\oplus \FF_{2^p}\oplus 0\oplus\cdots\oplus 0$ for any $p\in I$, but this space is not contained in $U$. Hence, $W\not \subset U$. 
    However, as $\sum_{p \in I} p \leq s$, Lemma~\ref{lem:degp} implies that $\{v_0,\ldots,v_{s-1}\}$ spans $W$, contradicting \eqref{eq:vj}.
\end{proof}

We use the following lemma to finish the proof of Theorem~\ref{main}.

\begin{lemma}\label{primesums}
    Every sufficiently large positive $N$ can be expressed as a sum of either three or four distinct primes, each of which is at least $N/4-o(N)$.
\end{lemma}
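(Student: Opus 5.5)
We will deduce the lemma from the three-primes theorem with almost equal summands: every sufficiently large odd $M$ can be written as $M=q_1+q_2+q_3$ with primes $q_i=M/3+O(M^{\theta})$ for some fixed $\theta<1$. This is known by Vinogradov's circle method, in the form due to Haselgrove and later refinements, which allows any exponent $\theta>5/8$, and in fact much smaller. Moreover, the number of such representations is $\gg M^{2\theta}/(\log M)^3$. We will take this count as given.

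\textbf{Case $N$ odd.} We use three primes. Apply the almost-equal three-primes theorem with $M=N$. Among the $\gg N^{2\theta}/(\log N)^3$ representations, those with two equal summands number at most $O(N^{\theta})$, since fixing $q_1=q_2$ determines $q_3$. Hence for large $N$ some representation has three distinct primes. Each of them is $N/3-O(N^{\theta})\geq N/4-o(N)$.

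\textbf{Case $N$ even.} We use four primes. Choose a prime $q_4$ with $q_4\in[N/4, N/4+N^{\theta}]$; such a prime exists for large $N$ by the prime number theorem in short intervals (Huxley, any $\theta>7/12$). Then $M=N-q_4$ is odd, since $q_4$ is odd and $N$ is even, and $M=3N/4+O(N^{\theta})$. Applying the theorem to $M$ gives many representations $M=q_1+q_2+q_3$ with $q_i=N/4+O(N^{\theta})$.

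Next we discard the representations with a repeated prime or with some $q_i=q_4$. Fixing one coincidence leaves at most $O(N^{\theta})$ choices, which is negligible against the total count of $\gg N^{2\theta}/(\log N)^3$. So some representation gives four distinct primes, all equal to $N/4-O(N^{\theta})$, which is $N/4-o(N)$.

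\textbf{Main obstacle.} The hard step is the input itself: the three-primes theorem with nearly equal summands, together with the lower bound on the number of representations needed to guarantee distinctness. If we prefer to avoid a quantitative count, there is a cheaper route that only needs representations with $q_i\in[M/3-\epsilon M, M/3+\epsilon M]$ for every fixed $\epsilon$. This follows from the circle method with the singular series bounded below for odd $M$, and the count is then $\gg_\epsilon M^2/(\log M)^3$. Letting $\epsilon\to0$ slowly gives the $o(N)$ error. Distinctness is again handled by subtracting the $O(M)$ degenerate solutions.
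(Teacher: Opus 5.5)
Your proof is correct, and its skeleton matches the paper's. For odd $N$ you take three almost-equal primes. For even $N$ you first choose an odd prime near $N/4$ and apply the odd case to the remainder.

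The difference is in how distinctness is secured and which analytic inputs are used.

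\emph{The paper's route.} It uses the sharpest available results: Matom\"aki--Maynard--Shao with exponent $11/20$, and Baker--Harman--Pintz with exponent $0.525$. Distinctness is then obtained structurally, in two places:
\begin{enumerate}[(i)]
\item For the three primes, the paper re-runs the MMS argument with distinct residue classes $b_1,b_2,b_3$. This requires looking inside their proof.
\item For the fourth prime, it places the prime in $[x,x+x^{0.525}]$ with $x$ just above $N/4+N^{11/20+o(1)}$. This forces it to exceed the other three strictly.
\end{enumerate}

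\emph{Your route.} You treat the almost-equal three-primes theorem as a black box, but with a quantitative lower bound $\gg M^{2\theta}/(\log M)^3$ on the number of representations. The short-interval circle-method asymptotic supplies this for $\theta>5/8$. You then discard two kinds of bad triples, each numbering only $O(N^\theta)$:
\begin{itemize}
\item triples with a repeated prime;
\item triples in which some $q_i=q_4$.
\end{itemize}
This is fully black-box, and it handles the coincidence with $q_4$ without any ordering trick. The cost is that you need a count rather than mere existence, and your explicit error terms are weaker. That cost is irrelevant here: the lemma only claims $N/4-o(N)$, and the application in Theorem~\ref{main} only needs $\min_i p_i\to\infty$.

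Your $\epsilon M$-interval variant goes further. It shows the lemma follows from the classical Vinogradov asymptotic for primes in intervals of length $\asymp\epsilon M$ together with the prime number theorem, followed by a diagonal choice of $\epsilon=\epsilon(N)\to 0$.
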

\begin{proof}
    It follows from work of Matom\"aki, Maynard, and Shao \cite{MMS} that every sufficiently large odd integer $N$ is the sum of three distinct odd primes, each at least $N/3 - N^{11/20+o(1)}$.\footnote{The main result \cite[Theorem~1.1]{MMS} is this statement, but without the condition that the primes are distinct. To recover a representation with distinct primes from their proof, one need only choose the residue classes $b_1,b_2,b_3$ in \cite[Proof of Theorem~2.1 from Proposition~3.1]{MMS} to be distinct.}
    As there is always a prime in $[x,x+x^{0.525}]$ for $x$ sufficiently large \cite{BHP}, it similarly follows that every sufficiently large even integer $N$ is the sum of four distinct odd primes, each at least $N/4 - N^{11/20+o(1)}$. Indeed, we can pick $x$ of the form $N/4+N^{11/20+o(1)}$ so that, after picking a prime $p \in [x,x+x^{0.525}]$, we can similarly write $N-p$ as the sum of three odd primes each less than $x \leq p$. 
\end{proof}

\begin{proof}[Proof of \cref{main}]
    Let $k$ be a sufficiently large positive integer. Using \cref{primesums}, write $k=p_1+\cdots+p_t+1$ with $t\in\{3,4\}$ for distinct primes $p_1,\ldots,p_t$ each at least $k/4-o(k)$. \cref{gencyclic} implies that
    \[w_{\mathrm{cyc}}(k)>(2^{p_1}-1)\cdots(2^{p_t}-1)=2^{k-1}\prod_{i=1}^t(1-2^{-p_i})\geq2^{k-1}\left(1-2^{-k/4+o(k)}\right).\]
    By \cref{fromcyclic}, we conclude
    \[w(k)\geq (k-1)(w_{\mathrm{cyc}}(k)-1)+1\geq(1-o(1))k2^{k-1},\]
    where the $o(1)$ term can be taken to be $k^{-1}+2^{-k/4+o(k)}$.
\end{proof}

We end with a few remarks on alternatives and variations of the above arguments.

\begin{remark}
    A weaker version of \cref{primesums} can be proven using only Bertrand's postulate, which enables the recovery of \cref{main} up to a constant factor. Indeed, for each integer $k>2$, one can write either $k-1$ or $k-2$ as a sum of distinct primes: beginning with $n=k-1$, successively subtract off a prime from $n$ in the interval $(n/2,n]$ until $n=0$ or $n=1$ remains. Using such a representation $p_1+\cdots+p_t\in\{k-1,k-2\}$, \cref{fromcyclic,gencyclic} give
    \[w(k)\geq (k-1)(w_{\mathrm{cyc}}(p_1+\cdots+p_t+1)-1)+1>(k-1)2^{k-2}\prod_{i=1}^t(1-2^{-p_i})>\frac1{10}k2^k.\]
\end{remark}

\begin{remark}
    Berlekamp observed that the coloring he gave for Theorem~\ref{berlekamp-cyc} with $s=p$ has no $s$-term arithmetic progression in color $0$ and no $(s+1)$-term arithmetic progression in color $1$. This also holds for the coloring we get in Lemma~\ref{gencyclic} with $s=p_1+\cdots+p_t$. Using this, Berlekamp was able to extend the interval coloring by adding $s$ additional elements (roughly $s/2$ on either side) to get a coloring of an interval of length $s$ longer with no monochromatic $(s+1)$-term arithmetic progression than one gets from directly applying Lemma~\ref{fromcyclic}. Letting $k=s+1$, one may show that the same modification works to improve our bound on $w(k)$ by an additive $k-1$. We have chosen to omit showing this for brevity, and because it does not improve the form of our bound in \cref{main}.
\end{remark}

\begin{remark}
    Burkert and Johnson \cite{BuJo} introduced the following alternative notion of cyclic van der Waerden numbers: let $w_c(k)$ be the minimum $N_0$ such that for every integer $N \geq N_0$, every two-coloring of $\mathbb{Z}/N\mathbb{Z}$ contains a monochromatic $k$-term arithmetic progression {\it with distinct terms}. It is easy to check that the proof of \cref{fromcyclic} extends to show 
    $$w(k) \geq w_c(k) \geq (k-1)(w_{\textrm{cyc}}(k)-1)+1.$$
    Consequently, Theorem \ref{main} holds with $w_c(k)$ in place of $w(k)$.
\end{remark}

\end{document}